\theoremstyle{plain}
\newtheorem{thm}{Theorem}
\theoremstyle{definition}
\newtheorem{re}[thm]{Remark}
\newtheorem{question}[thm]{Question}
\newtheorem{example}[thm]{Example}
\newcommand{\cE}{\mathcal{E}}
\newcommand{\cL}{\mathcal{L}}
\newcommand{\cH}{\mathcal{H}}
\newcommand{\cN}{\mathcal{N}}
\newcommand{\cS}{\mathcal{S}}
\newcommand{\cM}{\mathcal{M}}
\newcommand{\cT}{\mathcal{T}}
\theoremstyle{definition}
\begin{document}
\title[The Maximum Likelihood Data Singular Locus]{
\bf The Maximum Likelihood Data Singular Locus}
\author{Emil Horobe\c{t} and Jose Israel Rodriguez}

\subjclass[2010]{}
\keywords{Maximum likelihood degree, Data Singular locus, ML Discriminant}

\maketitle
\begin{abstract}
\vskip -0.5in
For general data, the number of complex solutions to the
likelihood equations is constant and this number is called the (maximum likelihood) ML-degree of the model.
In this article, we describe the special locus of data for which the likelihood equations
have a solution in the model's singular locus.
\end{abstract}

\section{Introduction}
Maximum likelihood estimation is an important problem in statistics.
On a statistical model, one wishes to maximize the likelihood function for given data.
The algebraic approach to this problem determines every critical point of the likelihood function on the the model's closure.
For general data there will be finitely many regular complex critical points.
Moreover,
this number remains constant and is called the maximum likelihood degree of the statistical model.

The (ML) maximum likelihood  degree was introduced in \cite{CHKS06} and \cite{HKS05}.
In \cite{Huh13} Huh relates the ML degree of a smooth model to a topological Euler characteristic and used topological methods to classify varieties with ML degree one \cite{Huh14}.
Recently, Euler characteristics and Gaussian degrees have been used to answer questions about the ML degree of a singular variety \cite{BW15,RW15,Wang15}.

One reason to study the ML degree is because continuous deformations of the data induce continuous deformations of the critical points.
So by deforming generic data to specific data,
 we are able to determine the critical points of the likelihood function as seen in  \cite{HRS12} for example.
For most choices of specific data, the critical points deform to distinct and regular critical points.
However, for special choices of specific data, special behavior may occur.
One type of special behavior is when the deformed critical points are no longer distinct. This was discussed from a computational view in \cite{RT15} for the likelihood equations.

In this paper we discuss a different type of special behavior.
We are interested in deformations of data leading a critical point into the singular locus.
We call the closure of this type of special data the \textbf{(ML) maximum likelihood data singular locus}.

Our main theorem bounds the data singular locus.
We give an algebraic variety contained in the data singular locus and an algebraic variety containing the data singular locus.
These bounds connect dual varieties and Hadamard geometry to the ML data singular locus.
We will give examples to show these bounds are strict.

\begin{thm}\label{ML Main}
Let $X$ be an algebraic statistical model in $\mathbb{P}^{n+1}$.
Then, the following two inclusions hold
\[
(X_{sing}\setminus \cH)*[1:\ldots:1:-1] \subseteq_{(1)} \mathrm{DS}(X)\subseteq_{(2)} (X_{sing}\setminus \cH)*X^*,
\]
where $\mathrm{DS}(X)$ is the data singular locus, $X^*$ is the dual variety, $X_{sing}\setminus \cH$ is the open part of the singular locus where none of the coordinates are zero and the Hadamard product $*$ is considered as in \eqref{Def:Hada}.
\end{thm}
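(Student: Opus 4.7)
My plan is to realize both inclusions as consequences of the standard description of the ML likelihood correspondence via the conormal variety of $X$, together with the observation that the statistical model lies in the hyperplane cut out by the covector $v_0:=[1:\dots:1:-1]$.

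First I would recall that at a regular point $p\in X_{reg}$, the likelihood equations for data $u$ are equivalent to $(u_0/p_0,\dots,u_{n+1}/p_{n+1})$ being conormal to $X$ at $p$. In Hadamard form this reads $u=p*v$ with $(p,v)\in N_X$, where $N_X\subset X\times(\mathbb{P}^{n+1})^\vee$ is the conormal variety of $X$. I would then define the ML likelihood correspondence $L_X$ as the closure of the Hadamard image of $N_X|_{X_{reg}}$ inside $X\times\mathbb{P}^{n+1}$, and observe that $\mathrm{DS}(X)$ is precisely the closure of the projection to the second factor of $L_X\cap((X_{sing}\setminus\cH)\times\mathbb{P}^{n+1})$.

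For inclusion (1), the key point I would exploit is that because $X$ is contained in the hyperplane with equation $v_0\cdot x=0$, the covector $v_0$ lies in $T_p^\perp X$ at every smooth point. Given $q\in X_{sing}\setminus\cH$, I would pick a sequence of smooth points $p^{(k)}\in X_{reg}\setminus\cH$ with $p^{(k)}\to q$; each $(p^{(k)},\, p^{(k)}*v_0)$ then lies in $L_X$, i.e.\ $p^{(k)}$ is an ordinary critical point for the data $u^{(k)}=p^{(k)}*v_0$. As $p^{(k)}\to q\in X_{sing}$ and $u^{(k)}\to q*v_0$, the deformation $u^{(k)}$ drags a critical point into the singular locus, placing $q*v_0$ in $\mathrm{DS}(X)$ by definition.

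For inclusion (2), I would take any $u\in\mathrm{DS}(X)$ and write it as a limit $u=\lim u^{(k)}$ of data $u^{(k)}=p^{(k)}*v^{(k)}$ with $(p^{(k)},v^{(k)})\in N_X$ and $p^{(k)}\to q\in X_{sing}\setminus\cH$. After passing to a subsequence $v^{(k)}\to v^\star$, and since $\pi_2(\overline{N_X})=X^*$ by the very definition of the dual variety, one has $v^\star\in X^*$, so $u=q*v^\star\in(X_{sing}\setminus\cH)*X^*$. The hard part will be the bookkeeping with closure operations and the openness of the condition $p\notin\cH$ along the limiting sequences; the genuinely new ingredient is the identification of the universal conormal direction $v_0$, which supplies the canonical embedding proving (1), while (2) follows from the standard fact that the $u$-factor of the likelihood correspondence is realized by Hadamard-multiplying $X$ by $X^*$.
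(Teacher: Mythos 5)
Your proposal is correct and follows essentially the same route as the paper's proof: inclusion (1) via the universal conormal covector $[1:\ldots:1:-1]$ supplied by $X\subseteq\{p_0+\cdots+p_n-p_s=0\}$ together with closedness of the conormal variety, and inclusion (2) via a limiting sequence over regular points and closedness of $X^*$. The only cosmetic difference is that the paper recovers the dual point as $b=u/p$ by coordinatewise division (legitimate since $p\in X_{sing}\setminus\cH$ has no zero coordinates), whereas you extract a convergent subsequence of $v^{(k)}$ and use continuity of the Hadamard product at $(q,v^\star)$; the two devices are interchangeable.
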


An illustrating example is the following.
\begin{example}\label{EX:first}
Consider the statistical model $\cM$ defined by the vanishing of the polynomial
$f:=p_0^3-p_0p_1^2-p_0p_2^2+2p_1p_2p_3-p_0p_3^2=0$ and $p_i>0$ (the polynomial $f$ is the determinant of a symmetric matrix with $p_0$ along the diagonal).
There is a unique singular point in the model $\cM$ given by $(\frac{1}{4},\frac{1}{4},\frac{1}{4},\frac{1}{4})$.
The algebraic statistical model $X$ we consider is defined by the $equations$:
$$
f=0\text{ and }p_0+p_1+p_2+p_3-p_s=0,
$$
in the unknowns $p_0,p_1,p_2,p_3,p_s$.
For almost all choices of data there are $10$ distinct regular critical points of the likelihood function on $X$, meaning the ML degree is 10.
Complex valued data is not statistically meaningful because the corresponding critical points will also be complex.
But with homotopy continuation, when we deform generic complex valued data to specific real valued data, we also deform the complex critical points to critical points that are statistically meaningful.

For example, for the data $u=(90,2,3,5)$,
we have $10$ real critical points of which  $6$ have positive $p$-coordinates and in the probability simplex.
But if we deform to special data, for example $u=(10,20,30,20)$,  one of the $10$ critical points will go into the singular locus.

In fact, $any$ data $u$ satisfying the algebraic relation
$$ 3u_0^2-2u_0u_1-5u_1^2-2u_0u_2+6u_1u_2-5u_2^2-2u_0u_3+6u_1u_3+6u_2u_3-5u_3^2
$$
will yield a critical point that is contained in the singular locus of $X$.

This is illustrated in the following picture.
We fix $u_3=50$. The data singular locus plotted in the $u_0,u_1,u_2$-coordinates is below.
For data on the curve, we have a critical point going into the singular locus.

\begin{figure}[h]
\begin{center}
\vskip -0.3cm
\includegraphics[scale=0.6]{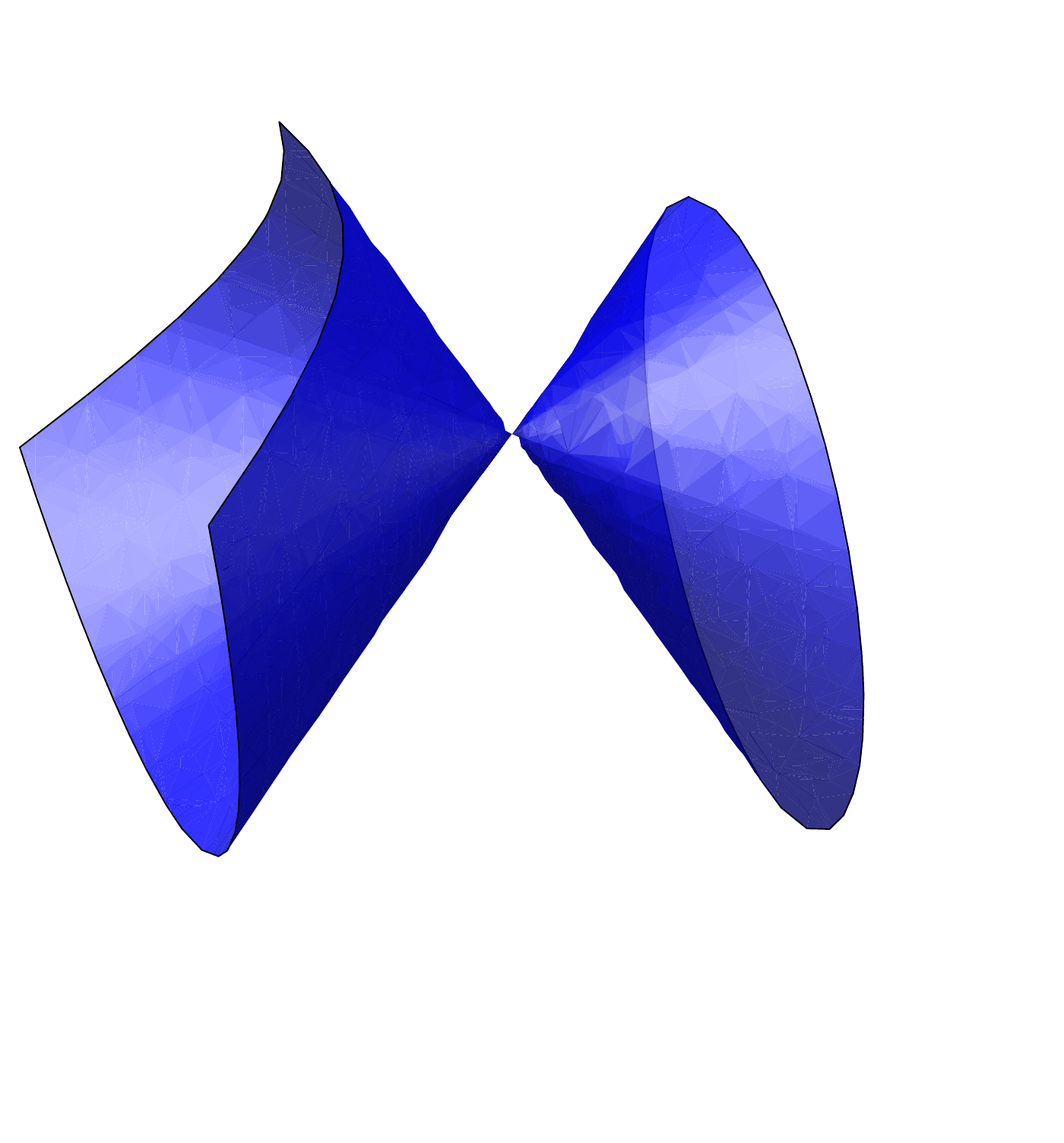}
\vskip -1.6cm
\caption{The surface defined by defined by $-12500 - 100 u_0 + 3 u_0^2 + 300 u_2 - 2 u_0 u_1 - 5 u_1^2 + 300 u_2 - 2 u_0 u_2 +
 6 u_1 u_2 - 5 u_2^2=0$.}
\end{center}
\end{figure}

The central object in this paper is the data singular locus, which is given by the polynomial above.
By convention, we introduce the coordinate $u_s$ and the relation
$(u_0+u_1+u_2+u_3)+u_s=0$.
After doing so, our theorem says that data singular locus contains the point $[\frac{1}{4}:\frac{1}{4}:\frac{1}{4}:\frac{1}{4}:1]*[1:1:1:1:-1]=[\frac{1}{4}:\frac{1}{4}:\frac{1}{4}:\frac{1}{4}:-1]$ and is contained in $[\frac{1}{4}:\frac{1}{4}:\frac{1}{4}:\frac{1}{4}:1]*X^{*}$
(which is exactly the data singular locus for this example).
 \end{example}

The paper is structured as follows.
In  Section~\ref{Sec:Def_ML}, we recall a geometric formulation of the maximum likelihood estimation problem in terms of the (extended) likelihood correspondence \cite{HS14} and dual varieties \cite{Rod14}.
Section~\ref{Sec:DSL}, includes the rigorous definition of the data singular locus and our main result followed by subsection~\ref{Sec:Exa} containing illustrating examples.
In Section~\ref{Sec:Concl}, we conclude by mentioning how our results are related to Hadamard-geometry. Indeed our results can be considered the multiplicative (Hadamard) analog of the recent work of \cite{Hor15} which uses Minkowski sums for the data singular locus related to the Euclidean Distance function and ED-degree \cite{DHOST13}.

\section{The likelihood correspondence and dual varieties}\label{Sec:Def_ML}
An algebraic statistical model $X$ for us is an algebraic variety in $\mathbb{P}^{n+1}$ that is contained in the hyperplane defined by $p_0+p_1+\cdots +p_n-p_s=0$.
The $p_s$ is a homogenization coordinate to keep track of the $s$um of coordinates $p_0,\dots,p_n$.
The radical ideal defining $X$ is denoted by $F_X$.

Our algebraic statistical model is the Zariski closure of a statistical model $\cM$ that's over the real numbers.
A data set $(u_0,\dots,u_n)$ has $u_i$ counting the number of times event $i$ occurs. A probability distribution $(p_0,p_1,\dots,p_n)$ has $p_i$ giving the probability of observing event $i$.
For fixed data, the likelihood function is the monomial $\prod_{i=0}^np_i^{u_i}$ measuring the likelihood of the data $u$ with respect to a probability distribution $p$. The maximum likelihood estimation problem for fixed data $u$ is to maximize the likelihood function on the statistical model $\cM$.

In applied algebraic geometry, we work with Zariski closures in projective space.
These spaces are compact and have a nice geometry.
In algebraic statistics, we take this geometric information to make conclusions about the underlying statistical model.
In this section we will introduce the algebraic approach to maximum likelihood estimation. This approach finds critical points of the homogenized likelihood function $p_0^{u_0}\cdots p_n^{u_n}p_s^{u_s}$ on $X$.
One of these critical points corresponds to the global maximum of the likelihood function on $\cM$ when the maximizer is in the interior of $\cM$.
All the information to determine critical points of the likelihood function is encoded in the so called \textit{extended likelihood correspondence}.
In order to define this object first we have to recall the conormal variety.
For a more detailed reference on computing conormal varieties one can see \cite{RS13}.

For a variety $X$ of $\mathbb{P}^{n+1}$ with coordinates $p_0,\dots,p_n,p_s$,
the {\bf conormal variety} $\cN_X$ in $\mathbb{P}^{n+1}\times\mathbb{P}^{n+1}$ in the coordinates $([p_0:\dots:p_n:p_s],[b_0:\dots:b_n:b_s])$ is defined as the following:
$$
\cN_X:=\overline{\left\{
\left([p,b]\right) : p\in X_{reg} \text{ and } b\perp T_pX
\right\}},
$$
where $X_{reg}$ is the regular locus of $X$ (the collection of nonsingular points).
The projection of the conormal variety to the $b$-coordinates is the {\bf dual variety} $X^{*}$. So we have
$X^*:=\pi_b(\mathcal{N}_X)$
and denote the radical ideal defining $X^*$ by $F_{X^*}$

\begin{example}
In this example, we recall how to compute the conormal variety.
Let $X$ be as in Example \ref{EX:first}.
The Jacobian of $X$ is the matrix of partial derivatives of the defining equations of $X$.
The rows are indexed by equations and columns by unknowns.
The conormal variety is defined using the Jacobian of $X$ augmented by the unknowns $b_0,\dots,b_s$.
Let $F_{minors}$ denote the ideal generated by the $c+1$-minors of the augmented Jacobian below, where $c$ is the codimension of $X$:
\small
$$\left[\begin{array}{ccccc}
b_{0} & b_{1} & b_{2} & b_{3} & b_{s}\\
1 & 1 & 1 & 1 & -1\\
\left(3p_0^{2}-p_1^{2}-p_2^{2}-p_3^{2}\right) & -2\left(p_0p_1-p_2p_3\right) & -2\left(p_0p_2-p_1p_3\right) & 2\left(p_1p_2-p_0p_3\right) & 0
\end{array}\right]
 .$$
\normalsize
Because the singular points of $X$  always cause these minors to vanish, one has to saturate by the singular locus.
We denote the radical ideal defining $X_{sing}$ by $F_{X_{sing}}$.
The ideal of $\cN_X$ is equal to $(F_{minors}+F_X):(F_{X_{sing}})^{\infty}$.
We denote this ideal by $F_\cN$.
The conormal variety has an ideal $F_\cN$ generated by
the generators of $F_X$, the generators of $F_{X^*}$, 1 bilinear polynomial, six degree $\{1,2\}$ polynomials, and nine degree $\{2,1\}$ polynomials.
\end{example}

The \textbf{extended likelihood correspondence} attaches extra information to each point of the conormal variety. These are triples $(p,b,u)$ in $\mathbb{P}_p^{n+1}\times\mathbb{P}_b^{n+1}\times\mathbb{P}_u^{n+1}$~such~that
\[
\cE_X:=\overline{\left\{
\left(p,b,u\right) : (p,b)\in \cN_X \text{ and }
[p_0b_0:\dots:p_nb_s:p_sb_s]=[u_0:\dots:u_n:u_s]
\right\}}.
\]
Note that because $p_0b_0+\cdots+p_nb_n+p_sb_s$ vanishes on $\cN_X$, we must have $u_0+\cdots+u_n+u_s=0$ on $\cE_X$.

There are many natural projections of the extended likelihood correspondence to consider.
The projection $\pi_{p,b}$ maps $\cE_X$ to the conormal variety $\cN_X$.
The projection $\pi_p$, $\pi_b$, and $\pi_u$ project to $X$, $X^*$, and $\mathbb{P}^{n+1}$ respectively.
The degree of the projection of the fibers of $\pi_p$, $\pi_b$, and $\pi_u$
equal the degree of $X$, degree of $X^*$ and ML degree of $X$ respectively.

The map $\pi_{p,u}$ projects to what is called the  likelihood correspondence $\cL_X$ \cite{HS14}.
Typically the likelihood correspondence ignores the the coordinate $p_s$ and $u_s$ because
 $u_s=-(u_0+\cdots+u_n)$ and $p_s=p_0+\cdots+p_n$.
But for cleaner theorems we account for these homogenizing coordinates.

Let $F_{Hada}$ denote the ideal generated by the $2$-minors of
\begin{equation}\label{HadamardMatrix}
\left[\begin{array}{cccc}
p_{0}b_{0} & \dots & p_{n}b_{n} & b_{s}\\
u_{0} & \dots & u_{n} & u_{s}
\end{array}\right].
\end{equation}
These equations represent $[p_0b_0:\dots:p_nb_n:p_sb_s]=[u_0:\dots:u_n:u_s]$.
A subtle issue is when the top row or bottom row of the matrix above is identically zero.
So we let $\cT$ denote the ideal $(p_0b_0,\dots,p_nb_n,p_sb_s)(u_0,\dots,u_n,u_s)$.

The ideal of the extended likelihood correspondence is given by
$(F_\cN+F_{Hada}):(\cT)^\infty$.
Eliminating the $b$-unknowns produces the likelihood correspondence.
Eliminating the $b$ and $u$ unknowns produces the variety $X$.
Eliminating the $p$ and $u$ unknowns produces the variety $X^*$.
Eliminating the $p,b$ produces a $\mathbb{P}^n$ contained in $\mathbb{P}^{n+1}_u$;
the degree of the fiber over a general point of this projection equals the ML degree of $X$; the $p$ coordinates of the fiber over data $u$ correspond to critical points of the likelihood function.

Consider a projective variety $X$ in $\mathbb{P}_p^{n+1}$ and another projective variety $Y$ in $\mathbb{P}_b^{n+1}$.
We define \textbf{the Hadamard product} $*$ \textbf{of $X$ and $Y$}
with respect to the hypersurface $p_0b_0+\cdots+p_nb_n+p_sb_s=0$
to be the image of the product of $X\times Y$ intersected with the hypersurface defined by $p_0b_0+\cdots+p_nb_n+p_sb_s$ to $\mathbb{P}_u^{n+1}$
under the following map:
\begin{equation}\label{Def:Hada}
([p_0:\cdots:p_n:p_s],[b_0:\dots:b_n:b_s])\mapsto
\left[p_{0}b_{0}:\dots:p_{s}b_{s}\right]=\left[u_{0}:\dots:u_{s}\right].
\end{equation}

\begin{example}
Let $X$ be the line given by $[t_0:-2t_0-t_1:-t_0-t_1]$ and $Y$ be the point give by $[a:b:c]$.
Then , $X*Y$ is the set of points in
$[a(t_0):b(-2t_0-t_1):c(-t_0-t_1)]$
satisfying $0=a(t_0)+b(-2t_0-t_1)+c(-t_0-t_1)$.
When $[a:b:c]=[1:2:-3]$ there is a unique point $[0:-2:3]$.
When $[a:b:c]=[1:1:-1]$, we have  $X*Y$ is the line $[t_0:-2t_0-t_1:t_0+t_1]$.
\end{example}

\begin{re}
It is important to distinguish Hadamard products with respect to a hypersurface (or more generally a variety) between the usual Hadamard product (which can be considered as with respect to the entire space).
The reason why we take the Hadamard product with respect to $p_0b_0+\cdots+p_nb_n+p_sb_s$ is because the conormal variety is always contained in the corresponding hypersurface.
\end{re}

\section{Computing the data singular locus and Main result}\label{Sec:DSL}

Our central object of study is the data singular locus.
Geometrically this is determined by first intersecting the extended likelihood correspondence with the set
$\cS:=\pi_p^{-1}(X_{sing}\setminus \cH)$ where $\cH$ is defined by $p_0\dots p_np_s$ and then projecting to data space.
The \textbf{data singular locus} is denoted by
$\mathrm{DS}(X)$ (abbreviating "data singular") and in symbols we have the data singular locus is
\begin{equation}\label{Def:DSL}
\mathrm{DS}(X):=\pi_u(\cS\cap \cE_X).
\end{equation}
One can compute the ML data singular locus in the following way.
Let $F_\cN$ denote the ideal defining the conormal variety $\cN_X$.
Recall $F_{Hada}$ denotes the ideal generated by the $2\times 2$ minors of \eqref{HadamardMatrix}.
Then,
the polynomials of the ideal $F_E:=(F_\cN+F_{Hada})$ witness $\cE_X$; by witness we mean the these equations define $\cE_X$ after saturating by extraneous components. These extraneous components are contained in the coordinate hyperplanes.

Let $F_{X_{sing}}$ denote the ideal of $X_{sing}$.
Then, the data singular locus is the eliminant of $(F_E+F_{X_{sing}}):(p_0\cdots p_np_s\cdot b_0\cdots b_nb_s\cdot u_0\cdots u_nu_s)^{\infty}$ in the $u$ coordinates.

\begin{example}
We summarize the discussion above with the following code written in Macaulay2 \cite{M2}.
\begin{changemargin}{0.5cm}{0.5cm}
\begin{verbatim}
-p are the primal unknowns corresponding to probabilities.
--b are the dual unknowns.
--u are the data
n=4;
R=QQ[p_0..p_(n-1),p_s]**QQ[b_0..b_(n-1),b_s]**QQ[u_0..u_(n-1),u_s];
pList=join(toList(p_0..p_(n-1)),{p_s});
bList=join(toList(b_0..b_(n-1)),{b_s});
uList=join(toList(u_0..u_(n-1)),{u_s});

--f1=f2=0 defines our algebraic statistical model.
f=det matrix{{p_0,p_1,p_3},{p_1,p_0,p_2},{p_3,p_2,p_0}};
f1=f;
f2=sum pList-2*p_s;
I=ideal(f1,f2);

---Determine the Jacobian of f1=f2=0 to get the singular locus.
Jac= jacobian gens I;
jacP=transpose(submatrix(Jac,{0..n},{0..numgens(I)-1}));

--There is only one component in the singular locus outside of the
--coordinate hyperplanes. We denote this component by Sing.
singLocus=decompose (minors(2,jacP)+I)
decSingLocus=for i in singLocus list saturate(i+I,product pList)
Sing=decSingLocus_0

--Sing=!=ideal 1_R
--To compute the conormal variety we consider an auxillary Jacobian
--with dual variables in the top row.
--The conormal variety has points where the dual variables are in
--the row span of the Jacobian. FN defines the conormal variety.
auxJac=matrix{bList}||jacP;
conormal1=minors(3,auxJac)+I;
decConormal1=decompose conormal1;
FN=(flatten for i in decConormal1 list if saturate(i,Sing)==i then i
else {})_0;

--EX defines the extended likelihood correspondence and possibly
--some external factors.
EX=FN+minors(2,matrix{uList, for i to #pList-1 list pList_i*bList_i});

--We need to intersect the extended likelihood correspondence with
--the singular locus.
decUpstairsDSLandExtra=decompose(Sing+EX);
for i in decUpstairsDSLandExtra list
saturate(i,product pList*product bList);
upStairsDSL=(flatten for i in decUpstairsDSLandExtra list
if i== saturate(i,product pList*product bList) then i else {})_0;

--We have to project down to the u-space to get the data singular
--locus.
DSL=eliminate(pList|bList,upStairsDSL)

--The dual variety is the b projection of the conormal
dualVariety=eliminate(pList,FN);

--The Hadamard product of the singular locus and the dual
decHadamardProductUpstairs=decompose ((Sing+dualVariety+
  ideal(sum for i to #pList-1 list pList_i*bList_i))+
  minors(2,matrix (for i to #pList-1 list {pList_i*bList_i,uList_i})))
for i in decHadamardProductUpstairs list eliminate(pList|bList,
  saturate(i,product pList*product bList) )
SingHadaDual=oo_(-1);
\end{verbatim}
\end{changemargin}

\end{example}
\newpage
The following theorem is the main result of this article.
\begin{thm}
Let $X$ be an algebraic statistical model in $\mathbb{P}^{n+1}$. Then, the following two inclusions hold
\[
(X_{sing}\setminus \cH)*[1:\ldots:1:-1] \subseteq_{(1)} \mathrm{DS}(X)\subseteq_{(2)} (X_{sing}\setminus \cH)*X^*.
\]
\end{thm}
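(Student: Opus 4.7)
The plan is to prove the two inclusions separately. The key geometric observation driving the lower bound is that because $X$ is contained in the hyperplane $H = \{p_0+\cdots+p_n - p_s = 0\}$, the point $b_0 := [1:\cdots:1:-1]$ is a normal direction to $T_qX$ at \emph{every} smooth point $q \in X$, so that $(q,b_0)$ lies in $\cN_X$ universally. The upper bound, in contrast, will reduce to carefully unwinding the definitions of $\cE_X$ and $\cN_X$.

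For inclusion (2), I would take an arbitrary $u \in \mathrm{DS}(X)$. By the definition of the data singular locus there is a triple $(p,b,u) \in \cE_X$ with $p \in X_{sing}\setminus \cH$; from the definition of $\cE_X$ this forces $(p,b) \in \cN_X$, hence $b$ belongs to $\pi_b(\cN_X) = X^*$, and the coordinates satisfy $u_i = p_ib_i$. The Euler-type identity $p_0b_0 + \cdots + p_nb_n + p_sb_s = 0$ holding on $\cN_X$ is precisely the Hadamard compatibility condition for the hyperplane used in the definition of $*$, so $u$ is literally the Hadamard product $p*b$ with $p \in X_{sing}\setminus \cH$ and $b \in X^*$, giving the desired containment.

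For inclusion (1), I would fix any $p \in X_{sing}\setminus \cH$ and take $b := [1:\cdots:1:-1]$. By the key observation above, $(q,b) \in \cN_X$ for every smooth $q \in X$ since $T_qX \subseteq T_qH$ and $b$ is the normal to $H$; the Zariski closure in the definition of $\cN_X$ then yields $(p,b) \in \cN_X$. Forming $u := p*b = [p_0:\cdots:p_n:-p_s]$, the compatibility $u_0+\cdots+u_n+u_s = p_0+\cdots+p_n - p_s$ vanishes because $p \in X \subseteq H$, and every coordinate of $u$ is nonzero because $p \notin \cH$. Hence the triple $(p,b,u)$ lies in $\cE_X$ and witnesses $u \in \mathrm{DS}(X)$.

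The one genuinely technical step is the closure hidden in the definition of $\cN_X$: in inclusion (1) the chosen $p$ is singular, so membership of $(p,b_0)$ in $\cN_X$ must be certified by a limiting argument along smooth points $q \to p$. This is the only nontrivial continuity issue, and it is essentially trivial because the conormal value $b_0$ is a constant along every smooth approximating sequence, so the limit of $(q,b_0)$ is just $(p,b_0)$. No analogous subtlety arises in (2), where the starting triple already lies in $\cE_X$ by assumption, and the hypothesis $p \notin \cH$ guarantees the Hadamard product is taken at a point avoiding the indeterminacy locus.
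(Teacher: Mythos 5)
Your inclusion (1) is correct and is essentially the paper's own argument: $[1:\cdots:1:-1]$ is conormal at every regular point because $X$ lies in the hyperplane $p_0+\cdots+p_n-p_s=0$, the closure defining $\cN_X$ carries the pair $(q,[1:\cdots:1:-1])$ over to a singular limit $p$, and since $p\notin\cH$ the triple $(p,[1:\cdots:1:-1],p*[1:\cdots:1:-1])$ lies in the pre-closure set defining $\cE_X$, so no further limiting is needed there.

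For inclusion (2), however, there is a genuine gap at the step ``from the definition of $\cE_X$ this forces \dots\ the coordinates satisfy $u_i=p_ib_i$,'' and your closing claim that ``no analogous subtlety arises in (2)'' is exactly backwards: this is the one delicate point of the theorem, and it is where the paper spends most of its proof. The set $\cE_X$ is defined as a Zariski \emph{closure}, so a point $(p,b,u)\in\cE_X$ is only guaranteed to satisfy polynomial consequences of the incidence conditions, not the conditions themselves. The condition $[p_0b_0:\cdots:p_sb_s]=[u_0:\cdots:u_s]$ is not a closed condition: what survives passage to the closure is only the vanishing of the $2\times 2$ minors of \eqref{HadamardMatrix}, and on the locus where the top row vanishes identically those minors constrain $u$ not at all (this is the pathology behind Example~\ref{Ex:Wrong}). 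So you must argue that the relation $u=p*b$ actually holds at your particular point. Two ways to close the gap: (a) the paper's route --- approximate $(p,b,u)$ by triples with $p^{(k)}$ regular and off $\cH$, write $b^{(k)}=u^{(k)}/p^{(k)}$ as in \eqref{Convergence}, and use continuity of coordinatewise division at $p$ (all $p_i\neq 0$) together with the closedness of $X^*$ to conclude $u/p\in X^*$ and $u=p*(u/p)$; or (b) note that the minors of \eqref{HadamardMatrix} vanish on all of $\cE_X$, and that at your point the top row is not identically zero because all $p_i\neq 0$ and some $b_j\neq 0$, so the minor vanishing really does yield $[p_0b_0:\cdots:p_sb_s]=[u_0:\cdots:u_s]$. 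Either patch uses $p\notin\cH$ in an essential way; you name that hypothesis in your last sentence but never actually deploy it to justify the equality $u=p*b$, which is where the proof currently fails to be a proof.
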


\begin{proof}
To prove the first inclusion take any point $p\in X_{sing}\setminus \cH$, then there is a sequence of regular points of the variety $p^{(k)}\in X_{reg}$, such that $p^{(k)}\to p$ with respect to the Euclidean topology.
Then the pairs $(p^{(k)},[1:\cdot:1:-1])$ are always elements of $\mathcal{N}_X$, since the point $[1:\ldots:1:-1]$ is normal to any point of the variety (since $X$ is inside the hyperplane defined by $p_0+...+p_n-p_s$, whose normal vector is $[1:\cdot:1:-1]$).
But then because $\mathcal{N}_X$ is Zariski closed (so closed w.r.t. the Euclidean topology as well) the pair $(p,[1:\cdot:1:-1])$ is also an element of the conormal variety.
But now all the $2\times 2$ minors defining $F_{Hada}$ vanish on the triple $(p,[1:\ldots:1:-1],p*[1:\ldots:1:-1])$, so it is an element of $\mathcal{E}_X$, with $p\in X_{sing}\setminus \cH$.
This means that $p*[1:\ldots:1:-1]$ is in the data singular locus.

To prove the second inclusion, take a point $u\in\mathrm{DS}(X)$, then there exist sequences $p^{(k)}\in X^{reg}\setminus \cH$, $u^{(k)}\in \mathbb{P}^{n+1}$ and $b^{(k)}\in X^*$, such that
\[
(p^{(k)},b^{(k)},u^{(k)})\to (p,b,u)\in \mathcal{E}_X,
\]
for some $p\in X_{sing}\setminus \cH$ and some $b\in \mathbb{P}^{n+1}$. Here the convergence is w.r.t. the Euclidean topology.
At this moment we don't know if $b$ is an element of the dual, because of the closure in $\mathcal{E}_X$. We want to prove that $b\in X^*$.

Since $(p^{(k)},b^{(k)},u^{(k)})$ is in the part of $\mathcal{E}_X$ where $p^{(k)}\in X^{reg}\setminus \cH$ (hence $b^{(k)}\in X^*$ and none of the coordinates of $p^{(k)}$ is zero), then by the definition of $\mathcal{E}_X$ we have that the sequence
\begin{equation}\label{Convergence}
b^{(k)}=[u^{(k)}_0/p^{(k)}_0:\cdots:u^{(k)}_n/p^{(k)}_n:u^{(k)}_s/p^{(k)}_s]=:u^{(k)}/p^{(k)} \in X^*
\end{equation}
is well defined and it converges inside $X^*$, since $X^*$ is Zariski closed (hence closed w.r.t. the Euclidean topology as well).
So when passing to the limit we get that $b=u/p$ for some $b\in X^*$ and $p\in X_{sing}\setminus \cH$, so $u=p*b\in (X_{sing}\setminus \cH)* X^*$.
\end{proof}
We have already seen examples of this theorem with Example~\ref{EX:first}.
We include examples below to show different possibilities of strict containments for $(1)$ and~$(2)$.

\subsection{Examples}\label{Sec:Exa}
In this section we present several useful examples concerning the ML data singular locus. In the first example presented the reader can see that inclusion $(1)$ can be strict and $(2)$ can be an equality.
\begin{example}\label{EX:Ternary}
Let $X$ be given by
$p_2(p_1-p_2)^2+(p_0-p_2)^3=p_0+p_1+p_2-p_s=0$.
The data singular locus is determined by
$2u_0-u_1-u_2=0$.
The bounds according to our theorem are $[\frac{1}{3}:\frac{1}{3}:\frac{1}{3}:-1]$ and
$2u_0-u_1-u_2=u_0+u_1+u_2+u_s=0$.
In this ternary cubic example we see inclusion $(1)$ is strict and $(2)$ is an equality.
\end{example}

In the next example we will see that both inclusions can be strict.
\begin{example}\label{EX:Whitneys}
Here we consider the algebraic statistical model defined by
$$(p_0-p_1)^2p_3-(p_1-p_2)^2(p_2-p_3)=0.$$ After a change of coordinates and dehomogenization this is the Whitney umbrella..

The primary decomposition of $X_{sing}\setminus \cH$ has $2$ components.
The first component is defined by
$ (3p_2 + p_3 - p_s, 3p_1 + p_3 - p_s, 3p_0 + p_3 - p_s)$ and the second component is an embedded point
$[\frac{1}{4}:\frac{1}{4}:\frac{1}{4}:\frac{1}{4}:1]$.
The data singular locus is a product of two polynomials:
$$
\begin{array}{c}
(4u_0^3-3u_0u_1^2+u_1^3-6u_0u_1u_2+3u_1^2u_2-3u_0u_2^2+3u_1u_2^2+u_2^3-15u_0^2u_3\\
+6u_0u_1u_3
-6u_1^2u_3+
24u_0u_2u_3+6u_1u_2u_3-15u_2^2u_3)\cdot(u_0+u_1-u_2-u_3).
\end{array}
$$
Clearly the singular locus is part of the data singular locus, but not equal to it. On the other side inclusion $(2)$ is also strict, because $(X_{sing}\setminus \cH)*X^{*}$ is the whole space, while the data singular locus is of codimension $2$.
\end{example}

In the previous example, we see that $(X_{sing}\setminus \cH)* X^*$ has the expected dimension.
In general by simple dimension count we get that
\[
\mathrm{edim}((X_{sing}\setminus \cH)* X^*)=\mathrm{dim}(X_{sing}\setminus \cH)+\mathrm{dim}(X^*)-1.
\]
In Example~\ref{EX:first} and Example~\ref{EX:Ternary} (where the second inclusion was an equality) the dimension of $(X_{sing}\setminus \cH)* X^*$ was less then expected.

\begin{re}\label{Rem:Smooth}
If the open part of the variety $X\setminus \cH$ is smooth, then as a consequence of Theorem~\ref{ML Main}, we immediately have that both $(1)$ and $(2)$ are equalities..
\end{re}
Reading through this section the reader could see examples of varieties where: both inclusion were strict, both inclusions were equalities and that inclusion $(1)$ can be strict while $(2)$ is an equality (attaining or not the expected dimension). What the reader could not see is an example where $(1)$ is an equality, while $(2)$ is strict. This raises the following question.
\begin{question}\label{Question}
Is there an algebraic statistical model $X$ for which inclusion (1) is an equality, while inclusion (2) is strict?
\end{question}

Throughout the manuscript we have considered only components of $X_{sing}$ that were not contained in the coordinate hyperplanes.
The reason for making this restriction is the following. First, from the statistics perspective, if $p_i=0$ for some $i$, then the likelihood function vanishes and cannot have such a point be a maximizer.
Second, from the mathematics perspective we see in the proof that to have convergence of \eqref{Convergence} we must have nonzero $p$-coordinates.
The next example illustrates the failures that occur when we try to consider components in the singular locus and coordinate hyperplanes.

\begin{example}\label{Ex:Wrong}
For this example we consider the algebraic statistical model defined by the cuspidal cubic $p_0^3+p_1^2p_2=0$. The singular locus of the model is given by the point $[0:0:1:-1]$, which lies in $\cH$. So by our definition the data singular locus is empty. Nevertheless, if we try to determine the locus of those data points where one of the critical points is singular we get a variety with three components
\[
V(u_0 + u_1 + u_2 + u_s)\cup V(u_2 + u_s, u_1, u_0)\cup V(u_1 - 2u_2 - 2u_s, u_0 + 3u_2 + 3u_s).
\]
It is true that $X_{sing}*[1:1:1:-1]$ is a subvariety of this object, and it turns out that $X_{sing}*X^*=X_{sing}$. So this locus is not sandwiched between $X_{sing}$ and $X_{sing}*X^*$, but rather contains data points $u$ for which the corresponding $b$-s, in the extended likelihood correspondence, do not necessarily lie in the dual variety.
\end{example}

\section{Conclusion}\label{Sec:Concl}
We have studied the maximum likelihood data singular locus in this paper, the locus of  data for which one of the critical points to the maximum likelihood equation becomes a singular point.
This is a subvariety of the ML-discriminant of the Lagrange likelihood equations, which was studied from a computational point of view in \cite{RT15}. The study of the ML-discriminant (or it's subvarieties) is an important step towards understanding the MLE, because the ML-degree itself does not characterize the number of statistically relevant critical points. We have to use the ML-discriminant to divide the data space according to the number of meaningful critical points.

Our definition \ref{Def:DSL} of this locus excludes those points which have a singular critical point on the coordinate hyperplane $\cH$, because of the behavior seen in Example~\ref{Ex:Wrong}. 
So our methods do not cover this situation. 
If the data contains zeros then one constructs the so called ML-table, as discussed in \cite{GR13}, which classifies the critical points according to the number of zero coordinates.

Our main result provided upper bounds and lower bounds to the data singular locus and we gave examples showing these bounds are tight (Example~\ref{EX:first}, Example~\ref{EX:Ternary}, and Remark~\ref{Rem:Smooth}), but also raising Question~\ref{Question}.
Furthermore, the maximum likelihood data singular locus is the multiplicative version of the Euclidean data singular locus \cite{Hor15}, where a similar behavior is visible.
In the Euclidean data singular locus the Minkowski sum operation is used instead of the Hadamard product. This is a great analogy between the two notions of algebraic degree.

For future directions, one can study Hadamard products with respect to other hypersurfaces or varieties.
Hadamard products have already been studied in \cite{CMS10,CTY10} in algebraic statistics. Recently, Hadamard products of linear spaces was also studied in \cite{BCK15}.
Maximum likelihood estimation and the study of the data singular locus are additional interesting cases of Hadamard geometry.

\section{Acknowledgements}
The authors would like to thank the organizers of GOAL, Bernd Sturmfels  and Jean-Charles Faugere. The first conversations for this collaboration began at this workshop.
The authors would also like to thank Jan Draisma for his support and advice on this collaboration as well.  The first author was supported
by the NWO Free Competition grant \textit{Tensors of bounded rank},
and the second author was supported by the National Science Foundation under Award No. DMS-1402545.

\bibliographystyle{acm}
\bibliography{refs}

\end{document}